\theoremstyle{plain}
\newtheorem{thm}{Theorem}[section]
\newtheorem{prop}[thm]{Proposition}
\newtheorem{defn}[thm]{Definition}
\newtheorem{rem}[thm]{Remark}
\begin{document}

\title{On Generalizations of Graded $r$-ideals}

\author{Rashid \textsc{Abu-Dawwas}}
\address{Department of Mathematics, Yarmouk University, Irbid, Jordan}
\email{rrashid@yu.edu.jo}

\author{Malik \textsc{Bataineh}}
\address{Department of Mathematics, Jordan University of Science and Technology, Irbid, Jordan}
\email{msbataineh@just.edu.jo}

\author{Ghida'a \textsc{Al-Qura'an}}
\address{Department of Mathematics, Yarmouk University, Irbid, Jordan}
\email{ghidaanaser.96@yahoo.com}

\subjclass[2010]{Primary 13A02; Secondary 16W50}

\keywords{Graded $r$-ideal; graded prime ideal; graded $\phi$-$r$-ideal.}

\begin{abstract}
In this article, we introduce a generalization of the concept of graded $r$-ideals in graded commutative rings with nonzero unity. Let $G$ be a group, $R$ be a $G$-graded commutative ring with nonzero unity and $GI(R)$ be the set of all graded ideals of $R$. Suppose that $\phi: GI(R)\rightarrow GI(R)\bigcup\{\emptyset\}$ is a function. A proper graded ideal $P$ of $R$ is called a graded $\phi$-$r$-ideal of $R$ if whenever $x, y$ are homogeneous elements of $R$ such that $xy\in P-\phi(P)$ and $Ann(x) =\{0\}$, then $y\in P$. Several properties of graded $\phi$-$r$-ideals have been examined.
\end{abstract}

\maketitle

\section{Introduction}

Throughout this article, $G$ will be a group with identity $e$ and $R$ a commutative ring with nonzero unity $1$. Then $R$ is called $G$-graded if $R=\displaystyle\bigoplus_{g\in G} R_{g}$ with $R_{g}R_{h}\subseteq R_{gh}$ for all $g, h\in G$ where $R_{g}$ is an additive subgroup of $R$ for all $g\in G$. The elements of $R_{g}$ are called homogeneous of degree $g$. If $a\in R$, then $a$ can be written uniquely as $\displaystyle\sum_{g\in G}a_{g}$, where $a_{g}$ is the component of $a$ in $R_{g}$. The component $R_{e}$ is a subring of $R$ and $1\in R_{e}$. The set of all homogeneous elements of $R$ is $h(R)=\displaystyle\bigcup_{g\in G}R_{g}$. Let $P$ be an ideal of a graded ring $R$. Then $P$ is called a graded ideal if $P=\displaystyle\bigoplus_{g\in G}(P\cap R_{g})$, i.e., for $a\in P$, $a=\displaystyle\sum_{g\in G}a_{g}$ where $a_{g}\in P$ for all $g\in G$. It is not necessary that every ideal of a graded ring is a graded ideal . For more details and terminology, see \cite{Hazart, Nastasescue}.

Let $I$ be a proper graded ideal of $R$. Then the graded radical of $I$ is $Grad(I)$, and is defined to be the set of all $x\in R$ such that for each $g\in G$, there exists a positive integer $n_{g}$ for which $x_{g}^{n_{g}}\in I$. One can see that if $x\in h(R)$, then $x\in Grad(I)$ if and only if $x^{n}\in I$ for some positive integer $n$. In fact, $Grad(I)$ is a graded ideal of $R$, see \cite{Refai Hailat}.

A proper graded ideal $P$ of $R$ is said to be graded prime if whenever $a, b\in h(R)$ such that $ab\in P$, then either $a\in P$ or $b\in P$ (\cite{Refai Hailat}). In 2006, Atani introduced in \cite{Atani Prime} the concept of graded weakly prime ideals, a proper graded ideal $P$ of $R$ is said to be a graded weakly prime ideal of $R$ if whenever $a, b\in h(R)$ such that $0\neq ab\in P$, then $a\in P$ or $b\in P$. In 2008, Jaber, Bataineh and Khashan in \cite{Jaber Bataineh Khashan} recently defined a proper graded ideal $P$ of $R$ is said to be graded almost prime if for $a, b\in h(R)$ such that $ab\in P-P^{2}$, then either $a\in P$ or $b\in P$. Also, graded almost prime ideals were generalized in \cite{Jaber Bataineh Khashan} to graded $n$-almost prime as follows: for $a, b\in h(R)$ such that $ab\in P-P^{n}$, then either $a\in P$ or $b\in P$. Later, in \cite{Alshehry, Alshehry 2}, a concept which covers all the previous definitions in a graded commutative ring $R$ has been introduced as follows: suppose that $\phi: GI(R)\rightarrow GI(R)\bigcup\{\emptyset\}$ is a function, where $GI(R)$ denotes the set of all graded ideals of $R$. A proper graded ideal $P$ of $R$ is said to be a graded $\phi$-prime ideal of $R$ if whenever $a, b\in h(R)$ such that $ab\in P-\phi(P)$, then $a\in P$ or $b\in P$. The following functions have been used:

\begin{enumerate}
\item $\phi_{\emptyset}(P)=\emptyset$ (graded prime ideal)

\item $\phi_{0}(P)=\{0\}$ (graded weakly prime ideal)

\item $\phi_{1}(P)=P$ (any graded ideal)

\item $\phi_{2}(P)=P^{2}$ (graded almost prime ideal)

\item $\phi_{n}(P)=P^{n}$ (graded almost $n$-prime ideal)

\item $\phi_{\omega}(P)=\displaystyle\bigcap_{n=1}^{\infty}P^{n}$ (graded $\omega$-prime ideal)
\end{enumerate}

In 2019, the concept of graded $r$-ideals has been introduced in \cite{Dawwas Bataineh}, a proper graded ideal $P$ of $R$ is said to be a graded $r$-ideal (resp. graded $pr$-ideal) of $R$ if whenever $x, y\in h(R)$ such that $xy\in P$ and $Ann(x)=\{0\}$, then $y\in P$ (resp. $y^{n}\in P$ for some positive integer $n$). Motivated by \cite{Ugurlu}, our goal is to introduce a generalization of the concept of graded $r$-ideals, a proper graded ideal $P$ of $R$ is called a graded $\phi$-$r$-ideal (resp. graded $\phi$-$pr$-ideal) of $R$ if whenever $x, y\in h(R)$ such that $xy\in P-\phi(P)$ and $Ann(x) =\{0\}$, then $y\in P$ (resp. $y^{n}\in P$ for some positive integer $n$). Among several results, we prove that if $P$ is a graded $\phi$-$r$-ideal of $R$ and $\phi(P)\subseteq r(R)=\left\{a\in R:Ann(a)=\{0\}\right\}$, then $P/\phi(P)$ is a graded $r$-ideal of $R/\phi(P)$, also, if $P/\phi(P)$ is a graded $r$-ideal of $R/\phi(P)$ and $\phi(P)$ is a graded $r$-ideal of $R$, then $P$ is a graded $\phi$-$r$-ideal of $R$ (Theorem \ref{Theorem 1}). We show that if $\phi(P)$ is a graded $r$-ideal of $R$, then $P$ is a graded $\phi$-$r$-ideal of $R$ if and only if $P$ is a graded $r$-ideal of $R$. (Theorem \ref{Theorem 3}). We prove that if $Grad(\phi(P))=\phi(Grad(P))$ and $P$ is a graded $\phi$-$r$-ideal of $R$, then $Grad(P)$ is a graded $\phi$-$r$-ideal of $R$ (Theorem \ref{Theorem 4}). We show that if $P$ is a graded $\phi$-$r$-ideal of $R$ such that $\phi(P)$ is a graded $r$-ideal of $R$, $I$ and $J$ are graded ideals of $R$ such that $IJ\subseteq P$, $IJ\nsubseteq \phi(P)$, $Ann(I) = \{0\}$ and $Ann(I)=Ann(c)$ for some $c\in I\bigcap h(R)$, then $J\subseteq P$ (Theorem \ref{Theorem 6}). We prove that if $P\subseteq r(R)$ is a graded ideal of $R$ contained in a graded ideal $I$ of $R$ and $I$ is a graded $\phi$-$r$-ideal of $R$, then $I/P$ is a graded $\phi_{P}$-$r$-ideal of $R/P$ (Theorem \ref{Theorem 7}). In Theorem \ref{Theorem 8}, we examine graded $\phi$-$r$-ideals in idealization rings. In Theorem \ref{Theorem 9}, we investigate graded $\phi$-$r$-ideals in multiplicative sets.

\section{Graded $\phi$-$r$-Ideals}

In this section, we introduce and study the concept of graded $\phi$-$r$-ideals.

\begin{defn}Let $R$ be a graded ring and $\phi: GI(R)\rightarrow GI(R)\bigcup\{\emptyset\}$ be a function. A proper graded ideal $P$ of $R$ is called a graded $\phi$-$r$-ideal (resp. graded $\phi$-$pr$-ideal) of $R$ if whenever $x, y\in h(R)$ such that $xy\in P-\phi(P)$ and $Ann(x) =\{0\}$, then $y\in P$ (resp. $y^{n}\in P$ for some positive integer $n$).
\end{defn}

\begin{rem}\label{1}
Consider the following functions:
\begin{enumerate}
\item $\phi_{\emptyset}(P)=\emptyset$ (graded $r$-ideal) (resp. graded $pr$-ideal)

\item $\phi_{0}(P)=\{0\}$ (graded weakly $r$-ideal) (resp. graded weakly $pr$-ideal)

\item $\phi_{1}(P)=P$ (any graded ideal)

\item $\phi_{2}(P)=P^{2}$ (graded almost $r$-ideal) (resp. graded almost $pr$-ideal)

\item $\phi_{n}(P)=P^{n}$ (graded almost $n$-$r$-ideal) (resp. graded almost $n$-$pr$-ideal)

\item $\phi_{\omega}(P)=\displaystyle\bigcap_{n=1}^{\infty}P^{n}$ (graded $\omega$-$r$-ideal) (resp. graded $\omega$-$pr$-ideal)
\end{enumerate}
\end{rem}

Since $P-\phi(P) = P-(P\bigcap\phi(P))$ for any graded ideal $P$, without generalization loss, throughout this article, we suppose it is $\phi(P)\subseteq P$. Moreover, For functions $\phi, \varphi:GI(R)\rightarrow GI(R)\bigcup\{\emptyset\}$, we write $\phi\leq\varphi$ if $\phi(P)\subseteq\varphi(P)$ for all $P\in GI(R)$. Obviously, therefore, we have the next order:
\begin{center}
$\phi_{\emptyset}\leq\phi_{0}\leq\phi_{\omega}\leq...\leq\phi_{n+1}\leq\phi_{n}\leq...\leq\phi_{2}\leq\phi_{1}$.
\end{center}

\begin{prop}\label{Proposition 1} Let $R$ be a graded ring and $P$ be a proper graded ideal of $R$. Assume that $\phi_{1}, \phi_{2}:GI(R)\rightarrow GI(R)\bigcup\{\emptyset\}$ be two functions with $\phi_{1}\leq\phi_{2}$.
\begin{enumerate}
\item If $P$ is a graded $\phi_{1}$-$r$-ideal of $R$, then $P$ is a graded $\phi_{2}$-$r$-ideal of $R$.

\item $P$ is a graded $r$-ideal if and only if $P$ is a graded weakly $r$-ideal. Also, if $P$ is a graded weakly $r$-ideal, then $P$ is a graded $\omega$-$r$-ideal, and then $P$ is a graded $(n+1)$-almost $r$-ideal, and then $P$ is a graded $n$-$r$-ideal ($n\geq2$), and then $P$ is a graded almost $r$-ideal.

\item $P$ is a graded $\omega$-$r$-ideal if and only if $P$ is a graded $n$-almost $r$-ideal for each $n\geq2$.

\item If $P$ is a graded idempotent ideal of $R$, then $P$ is a graded $\phi_{n}$-$r$-ideal of $R$ for every $n\geq1$.
\end{enumerate}
\end{prop}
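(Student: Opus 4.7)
The plan is to reduce all four parts to a single monotonicity observation combined with one small special feature of $r$-ideals.

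For part (1), I would begin from the set-theoretic fact that $\phi_{1}\leq\phi_{2}$ forces $P-\phi_{2}(P)\subseteq P-\phi_{1}(P)$. Hence any homogeneous pair $(x,y)$ witnessing the hypothesis of the graded $\phi_{2}$-$r$-condition, namely $xy\in P-\phi_{2}(P)$ with $Ann(x)=\{0\}$, automatically witnesses the hypothesis of the graded $\phi_{1}$-$r$-condition, so the assumed $\phi_{1}$-$r$-ideal property delivers $y\in P$.

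For part (2), both directions of the equivalence turn on whether $xy$ can equal $0$. Since $\phi_{\emptyset}\leq\phi_{0}$, part (1) immediately gives that every graded $r$-ideal is a graded weakly $r$-ideal. Conversely, if $P$ is a graded weakly $r$-ideal and $xy\in P$ with $x,y\in h(R)$ and $Ann(x)=\{0\}$, I would split into cases: if $xy\neq 0$ then $xy\in P-\{0\}=P-\phi_{0}(P)$ and the weakly $r$-ideal property yields $y\in P$; if $xy=0$ then $y\in Ann(x)=\{0\}$, so $y=0\in P$. This use of $Ann(x)=\{0\}$ to absorb the exceptional case is the one genuinely non-formal step in the whole proposition. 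The remaining chain of implications follows by iterated application of part (1) to the order $\phi_{0}\leq\phi_{\omega}\leq\cdots\leq\phi_{n+1}\leq\phi_{n}\leq\cdots\leq\phi_{2}$ already recorded in the excerpt.

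For part (3), the forward direction is part (1) applied to the inequality $\phi_{\omega}\leq\phi_{n}$. For the converse, I would assume $P$ is a graded $n$-almost $r$-ideal for every $n\geq 2$ and take $xy\in P-\phi_{\omega}(P)$ with $Ann(x)=\{0\}$; since $xy\notin\bigcap_{k=1}^{\infty}P^{k}$, there exists some $N$, necessarily with $N\geq 2$ (otherwise $xy\in P-P=\emptyset$), such that $xy\notin P^{N}$, and then $xy\in P-\phi_{N}(P)$ triggers the $\phi_{N}$-$r$-ideal property to give $y\in P$. For part (4), a one-line induction from $P=P^{2}$ yields $P=P^{n}$ for every $n\geq 1$, so $P-\phi_{n}(P)=P-P=\emptyset$ and the defining implication is vacuously satisfied.

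The only mild obstacle is being careful in part (2) that the hypothesis $Ann(x)=\{0\}$ is strong enough to upgrade the weakly $r$-ideal condition into the full $r$-ideal condition; every other item is a straightforward bookkeeping application of the monotonicity established in part (1).
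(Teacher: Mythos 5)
Your proof is correct and follows essentially the same route as the paper: part (1) is the set-theoretic inclusion $P-\phi_{2}(P)\subseteq P-\phi_{1}(P)$, parts (3) and (4) are argued identically (picking an $n\geq 2$ with $xy\notin P^{n}$ for the converse of (3), and vacuousness from $P=P^{n}$ for (4)), and the chain of implications in (2) comes from iterating (1) along the displayed order of the $\phi$'s. The one place you go beyond the paper is the nontrivial direction of the equivalence in (2): the paper disposes of all of (2) with ``it follows by (1),'' which cannot yield ``weakly $r$-ideal $\Rightarrow$ $r$-ideal'' since that implication runs against the order $\phi_{\emptyset}\leq\phi_{0}$; your case split on whether $xy=0$, using $Ann(x)=\{0\}$ to force $y=0\in P$ in the exceptional case, is exactly the argument needed there and is correct.
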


\begin{proof}
\begin{enumerate}
\item It is straightforward.

\item It follows by (1).

\item Suppose that $P$ is a graded $\omega$-$r$-ideal. Since $\phi_{\omega}\leq\phi_{n}$, $P$ is a graded $n$-almost $r$-ideal. Conversely, let $x, y\in h(R)$ such that $xy\in P-\displaystyle\bigcap_{n=1}^{\infty}P^{n}$ and $Ann(x)=\{0\}$. Then $xy\in P-P^{n}$ for some $n\geq2$, and then since $P$ is a graded $n$-almost $r$-ideal of $R$, $y\in P$. Therefore, $P$ is a graded $\omega$-$r$-ideal.

\item The result follows since $P^{n}=P$ for all $n\geq2$.
\end{enumerate}
\end{proof}

Let $R$ be a graded ring and $P$ be a graded ideal of $R$. Then $R/P$ is a graded ring by $(R/P)_{g}=(R_{g}+P)/P$ for all $g\in G$ (\cite{Nastasescue}). Let $r(R)=\left\{a\in R:Ann(a)=\{0\}\right\}$.

\begin{thm}\label{Theorem 1} Let $R$ be a graded ring and $P$ be a proper graded ideal of $R$.
\begin{enumerate}
\item If $P$ is a graded $\phi$-$r$-ideal of $R$ and $\phi(P)\subseteq r(R)$, then $P/\phi(P)$ is a graded $r$-ideal of $R/\phi(P)$.

\item If $P/\phi(P)$ is a graded $r$-ideal of $R/\phi(P)$ and $\phi(P)$ is a graded $r$-ideal of $R$, then $P$ is a graded $\phi$-$r$-ideal of $R$.
\end{enumerate}
\end{thm}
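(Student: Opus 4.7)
The plan is to prove the two implications separately; in each, the technical core is transferring the regularity condition $Ann(\cdot)=\{0\}$ between $R$ and the quotient $R/\phi(P)$. Throughout I write $\overline{\phantom{x}}$ for the canonical projection $R\to R/\phi(P)$ and use two standard facts: a homogeneous element of $R/\phi(P)$ has a homogeneous lift in $R$, and $\phi(P)$ being a graded ideal means $r=\sum_g r_g\in\phi(P)$ forces $r_g\in\phi(P)$ for every $g$.

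For (1), I would pick homogeneous $\overline{x},\overline{y}\in R/\phi(P)$ with $\overline{x}\,\overline{y}\in P/\phi(P)$ and $Ann(\overline{x})=\{\overline{0}\}$, take homogeneous lifts $x,y\in h(R)$, and first establish $Ann_R(x)=\{0\}$. Given $rx=0$, reducing mod $\phi(P)$ forces $\overline{r}=\overline{0}$, so $r\in\phi(P)$; each homogeneous component $r_g$ then lies in $\phi(P)\subseteq r(R)$ and satisfies $r_gx=0$ (by comparing degrees in $rx=0$). Since $Ann(\overline{x})=\{\overline{0}\}$ combined with $P$ being proper forces $\overline{x}\neq\overline{0}$, we have $x\neq 0$, so each nonzero $r_g$ would contradict $Ann(r_g)=\{0\}$; hence $r_g=0$ for all $g$. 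With $Ann(x)=\{0\}$ in hand I split on whether $xy\in\phi(P)$: if not, then $xy\in P-\phi(P)$ and the $\phi$-$r$-ideal property of $P$ yields $y\in P$; if yes, then $\overline{x}\,\overline{y}=\overline{0}$ and the annihilator hypothesis forces $\overline{y}=\overline{0}$, so $y\in\phi(P)\subseteq P$. Either way $\overline{y}\in P/\phi(P)$.

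For (2), I would take homogeneous $x,y\in h(R)$ with $xy\in P-\phi(P)$ and $Ann(x)=\{0\}$, and verify $Ann(\overline{x})=\{\overline{0}\}$ in order to apply the graded $r$-ideal hypothesis on $P/\phi(P)$. Given $\overline{r}\,\overline{x}=\overline{0}$, i.e.\ $rx\in\phi(P)$, graded-ness of $\phi(P)$ and degree comparison give $r_gx\in\phi(P)$ for each $g$; because $\phi(P)$ is a graded $r$-ideal and $Ann(x)=\{0\}$, this forces $r_g\in\phi(P)$, hence $r\in\phi(P)$ and $\overline{r}=\overline{0}$. Then $\overline{x}\,\overline{y}\in P/\phi(P)$ together with $Ann(\overline{x})=\{\overline{0}\}$ gives $\overline{y}\in P/\phi(P)$, that is, $y\in P+\phi(P)=P$. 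The main obstacle in both halves is the same piece of bookkeeping: the annihilator is taken over the full ring, not just over homogeneous elements, so an arbitrary $r$ annihilating a homogeneous $x$ must be decomposed and handled componentwise before the hypotheses on $\phi(P)$ (either $\phi(P)\subseteq r(R)$ in (1), or the graded $r$-ideal property in (2)) can be invoked. Once each $r_g$ is placed in $\phi(P)$, the case split on whether $xy\in\phi(P)$ in (1) and the direct lift in (2) finish the proof.
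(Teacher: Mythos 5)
Your proof is correct and follows essentially the same route as the paper: lift homogeneous elements, transfer the condition $Ann(\cdot)=\{0\}$ between $R$ and $R/\phi(P)$, and then apply the relevant hypothesis on each side. You are in fact slightly more careful than the published argument in two places --- in (1) you treat the case $xy\in\phi(P)$ separately (the paper silently asserts $xy\in P-\phi(P)$), and in (2) you decompose the annihilating element into homogeneous components before invoking the graded $r$-ideal property of $\phi(P)$, which is stated only for homogeneous elements.
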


\begin{proof}
\begin{enumerate}
\item By (\cite{Saber}, Lemma 3.2), $P/\phi(P)$ is a graded ideal of $R/\phi(P)$. Let $x+\phi(P), y+\phi(P)\in h(R/\phi(P))$ such that $(x + \phi(P))(y + \phi(P))\in P/\phi(P)$ and $Ann(x + \phi(P))=\{0+\phi(P)\}$. Then $x, y\in h(R)$ such that $xy\in P-\phi(P)$ and $Ann(x)=\{0\}$. Indeed, if $Ann(x)\neq\{0\}$, then there exists $0\neq a\in R$ such that $ax=0$, and then $a+\phi(P)\in R/\phi(P)$ with $(a+\phi(P))(x+\phi(P))=0+\phi(P)$, but $Ann(x+\phi(P))=\{0+\phi(P)\}$ which means that $a+\phi(P)=0+\phi(P)$, so $a\in \phi(P)\subseteq r(R)$ and then $Ann(a)=\{0\}$, which is a contradiction since $0\neq x\in Ann(a)$. Thus as $P$ is a graded $\phi$-$r$-ideal of $R$, we obtain that $y\in P$. This implies that $y + \phi(P)\in P/\phi(P)$.

\item Let $x, y\in h(R)$ such that $xy\in P-\phi(P)$ and $Ann(x)=\{0\}$. Then $x+\phi(P), y+\phi(P)\in h(R/\phi(P))$ such that $(x + \phi(P))(y + \phi(P))\in P/\phi(P)$ and $Ann(x+\phi(P))=\{0+\phi(P)\}$. Indeed, if $Ann(x + \phi(P))\neq\{0+\phi(P)\}$, then there exists $0\neq a + \phi(P)\in R/\phi(P)$ such that $(x + \phi(P))(a + \phi(P)) = 0 + \phi(P)$. Then $xa\in\phi(P)$. Since $\phi(P)$ is a graded $r$-ideal, we get $a\in\phi(P)$, which is a contradiction. Therefore, since $P/\phi(P)$ is a graded $r$-ideal, we have $y+\phi(P)\in P/\phi(P)$, so $y\in P$.
\end{enumerate}
\end{proof}

\begin{thm}\label{Theorem 2} Let $R$ be a graded ring and $P$ be a proper graded ideal of $R$. Then the followings statements are equivalent:
\begin{enumerate}
\item $P$ is a graded $\phi$-$r$-ideal of $R$.

\item For every $a\in h(R)\bigcap r(R)$, $(P : a) = P\bigcup (\phi(P) : a)$.

\item For every $a\in h(R)\bigcap r(R)$, $(P : a) = P$ or $(P : a) = (\phi(P) : a)$.
\end{enumerate}
\end{thm}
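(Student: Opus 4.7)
I plan to prove the cycle $(1)\Rightarrow(2)\Rightarrow(3)\Rightarrow(1)$, with the first implication being the technical heart and the other two largely formal.

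The direction $(3)\Rightarrow(1)$ is a direct specialization: given $x,y\in h(R)$ with $xy\in P-\phi(P)$ and $Ann(x)=\{0\}$, we have $x\in h(R)\cap r(R)$ and $y\in(P:x)$; applying $(3)$ at $a=x$, the alternative $(P:x)=(\phi(P):x)$ would force $xy\in\phi(P)$, contradicting the assumption, so $(P:x)=P$ and $y\in P$. The direction $(2)\Rightarrow(3)$ invokes the classical fact that an additive subgroup is never the set-theoretic union of two proper subgroups: since both $P$ and $(\phi(P):a)$ are subideals of $(P:a)$, the equality $(P:a)=P\cup(\phi(P):a)$ forces one of them to coincide with $(P:a)$.

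For $(1)\Rightarrow(2)$, fix $a\in h(R)\cap r(R)$. The containment $P\cup(\phi(P):a)\subseteq(P:a)$ is immediate from $\phi(P)\subseteq P$. For the reverse, take $y\in(P:a)$; the case $ya\in\phi(P)$ gives $y\in(\phi(P):a)$, so I reduce to $ya\in P-\phi(P)$ and aim at $y\in P$. The principal obstacle is that the graded $\phi$-$r$-ideal hypothesis applies only to homogeneous elements, while $y\in (P:a)$ may be arbitrary. I plan to decompose $y=\sum_g y_g$; gradedness of $P$ and $\phi(P)$ combined with $a\in h(R)$ yields each $y_ga\in P$, and the assumption $ya\notin\phi(P)$ supplies some $y_{g_0}$ with $y_{g_0}a\in P-\phi(P)$, so $(1)$ applied to the homogeneous pair $(y_{g_0},a)$ delivers $y_{g_0}\in P$. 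The same dichotomy classifies each remaining component: $y_ga\in\phi(P)$ places $y_g\in(\phi(P):a)$, while $y_ga\in P-\phi(P)$ places $y_g\in P$ by $(1)$.

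The delicate final step, which I regard as the genuine technical point, is to consolidate this componentwise classification into the single-element conclusion $y\in P\cup(\phi(P):a)$ rather than the weaker $y\in P+(\phi(P):a)$. I anticipate handling this by applying the subgroup-union lemma (the same tool used for $(2)\Rightarrow(3)$) degree by degree to $(P:a)\cap R_g$: each such homogeneous slice is covered by the two subgroups $P\cap R_g$ and $(\phi(P):a)\cap R_g$, hence must be contained in one of them; the existence of a homogeneous $x\in(P:a)\setminus P$ in the nontrivial case then constrains these degree-wise choices to be uniform across $g$, producing the desired global inclusion.
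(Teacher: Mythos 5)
Your reductions $(3)\Rightarrow(1)$ and $(2)\Rightarrow(3)$ are correct, and so is the componentwise analysis at the start of $(1)\Rightarrow(2)$: for each $g$, the slice $(P:a)\cap R_{g}$ is the union of the two subgroups $P\cap R_{g}$ and $(\phi(P):a)\cap R_{g}$, hence equals one of them. The gap is exactly at the step you yourself flag as delicate: the assertion that these degree-wise choices must be uniform in $g$ does not follow from $(1)$, and it is in fact false. Take $G=\mathbb{Z}_{2}$ and $R=\mathbb{Z}[t]/(t^{2})$ with $R_{0}=\mathbb{Z}$ and $R_{1}=\mathbb{Z}t$, so that $h(R)\cap r(R)=\mathbb{Z}-\{0\}$. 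Let $P=4\mathbb{Z}\oplus\mathbb{Z}t$ and choose $\phi$ with $\phi(P)=(4)=4\mathbb{Z}\oplus 4\mathbb{Z}t$ (for instance $\phi(I)=I\cap(4)$). Since $(P-\phi(P))\cap R_{0}=\emptyset$ and $R_{1}\subseteq P$, every instance of the defining condition for a graded $\phi$-$r$-ideal is satisfied vacuously or trivially, so $(1)$ holds. But for $a=2$ one computes $(P:2)=2\mathbb{Z}\oplus\mathbb{Z}t$, $(\phi(P):2)=2\mathbb{Z}\oplus 2\mathbb{Z}t$, and $P=4\mathbb{Z}\oplus\mathbb{Z}t$; the element $2+t$ lies in $(P:2)$ but in neither $P$ nor $(\phi(P):2)$, so both $(2)$ and $(3)$ fail. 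Here the degree-$0$ slice of $(P:2)$ is contained in $(\phi(P):2)$ but not in $P$, while the degree-$1$ slice is contained in $P$ but not in $(\phi(P):2)$ --- precisely the non-uniform configuration your final step needs to exclude and cannot.

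For what it is worth, the paper's own proof of $(1)\Rightarrow(2)$ has the identical defect: it splits into the cases ``$ab_{g}\in\phi(P)$ \dots\ for all $g$'' and ``$ab_{g}\notin\phi(P)$ \dots\ for all $g$'' and silently omits the mixed case, which is the only nontrivial one. So the implication $(1)\Rightarrow(2)$ (equivalently $(1)\Rightarrow(3)$) is false for arbitrary $\phi$ and nontrivial gradings, and no consolidation argument can close the gap; the theorem would need an additional hypothesis (a trivial grading, or a condition tying $\phi(P)$ to $P$ degree by degree) to become true. The implications $(3)\Rightarrow(1)$ and $(2)\Rightarrow(3)$, the inclusion $P\bigcup(\phi(P):a)\subseteq(P:a)$, and your degree-wise dichotomy are the parts of the argument that survive.
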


\begin{proof}$(1)\Rightarrow(2)$: Let $a\in h(R)\bigcap r(R)$ and $b\in(P:a)$. Then $b_{g}\in (P:a)$ for all $g\in G$ as $(P:a)$ is a graded ideal of $R$ by (\cite{Dawwas Yildiz}, Lemma 2), and then for any $g\in G$, $ab_{g}\in P$. If $ab_{g}\in \phi(P)$, then $b_{g}\in(\phi(P):a)$ for all $g\in G$, and then $b\in(\phi(P):a)$. If $ab_{g}\notin\phi(P)$, then as $P$ is a graded $\phi$-$r$-ideal and $a\in r(R)$, $b_{g}\in P$ for all $g\in G$, and then $b\in P$. Consequently, $b\in \left(P\bigcup(\phi(P):a)\right)$. The other containment is clear.

$(2)\Rightarrow(3)$: It is clear.

$(3)\Rightarrow(1)$: Let $b, c\in h(R)$ such that $bc\in P-\phi(P)$ and $b\in r(R)$. By (3), we conclude that $(P : b) = P$ or $(P : b) = (\phi(P) : b)$.
Suppose that $(P : b) = P$. Then as $bc\in P$, we get $c\in(P : b) = P$, as needed. Suppose that $(P : b) = (\phi(P) : b)$. Since $c\in(P : b)$, we say $bc\in\phi(P)$, which is a contradiction.
\end{proof}

\begin{prop}\label{Proposition 2} Let $R$ be a graded ring and $P$ be a proper graded ideal of $R$. If $P$ is a graded $\phi$-$r$-ideal of $R$, then $\left(P\bigcap h(R)\right)-\phi(P)\subseteq Zd(R)$, where $Zd(R)$ is the set of all zero divisors of $R$.
\end{prop}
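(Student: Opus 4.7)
The plan is to argue by contrapositive (really a direct contradiction): take an arbitrary element $x$ of $\left(P \cap h(R)\right) - \phi(P)$ and show $x \in Zd(R)$. Suppose, for the sake of contradiction, $x \notin Zd(R)$; then by definition $Ann(x) = \{0\}$.

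The trick is to test the graded $\phi$-$r$-ideal condition against the pair $(x, 1)$. Since $1 \in R_e \subseteq h(R)$ and $x \in h(R)$, both factors are homogeneous; moreover $x \cdot 1 = x \in P - \phi(P)$ by the choice of $x$, and $Ann(x) = \{0\}$. Applying the definition of a graded $\phi$-$r$-ideal to the homogeneous elements $x$ and $y := 1$ forces $1 \in P$, contradicting the assumption that $P$ is proper.

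There is essentially no obstacle here: the argument is a one-line application of the definition once one notices that multiplying by $1$ reduces the statement to the defining implication. The only thing to be mindful of is that we use $\phi(P) \subseteq P$ (which is the standing assumption made just before Proposition \ref{Proposition 1}) to ensure that $(P \cap h(R)) - \phi(P)$ is the natural set to test. No further machinery is required.
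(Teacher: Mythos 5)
Your proof is correct and is essentially identical to the paper's: both test the pair $(x,1)$ against the defining implication of a graded $\phi$-$r$-ideal and derive $1\in P$, contradicting that $P$ is proper. No differences worth noting.
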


\begin{proof}Suppose that $\left(P\bigcap h(R)\right)-\phi(P)\nsubseteq Zd(R)$. Then there exists $a\in \left(P\bigcap h(R)\right)-\phi(P)$ with $a\in r(R)$, and then $a, 1\in h(R)$ such that $a.1\in P-\phi(P)$. So, as $P$ is a graded $\phi$-$r$-ideal, we get $1\in P$, which is a contradiction.
\end{proof}

\begin{thm}\label{Theorem 3} Let $R$ be a graded ring and $P$ be a proper graded ideal of $R$ such that $\phi(P)$ is a graded $r$-ideal of $R$. Then $P$ is a graded $\phi$-$r$-ideal of $R$ if and only if $P$ is a graded $r$-ideal of $R$.
\end{thm}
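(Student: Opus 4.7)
The plan is to exploit the trivial case split on whether $xy$ lands in $\phi(P)$ or not, using the two hypotheses to handle the two cases separately.

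For the backward implication, I would observe that this is essentially immediate from Proposition \ref{Proposition 1}(1): since $\phi_{\emptyset} \leq \phi$, every graded $r$-ideal is automatically a graded $\phi$-$r$-ideal, with no need to use that $\phi(P)$ is a graded $r$-ideal.

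For the forward implication, suppose $P$ is a graded $\phi$-$r$-ideal and $\phi(P)$ is a graded $r$-ideal. Take $x, y \in h(R)$ with $xy \in P$ and $\mathrm{Ann}(x) = \{0\}$; the goal is to conclude $y \in P$. I would split into the two cases $xy \notin \phi(P)$ and $xy \in \phi(P)$. In the first case, $xy \in P - \phi(P)$, so the graded $\phi$-$r$-ideal hypothesis on $P$ together with $\mathrm{Ann}(x) = \{0\}$ forces $y \in P$. In the second case, $xy \in \phi(P)$ with $\mathrm{Ann}(x) = \{0\}$, so the graded $r$-ideal hypothesis on $\phi(P)$ gives $y \in \phi(P) \subseteq P$ (recalling the standing assumption $\phi(P) \subseteq P$ made after Remark \ref{1}).

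There is really no obstacle here; the argument is a two-line case analysis. The only subtlety is remembering to invoke $\phi(P) \subseteq P$ when closing the second case. I would present this as a short, self-contained proof rather than appealing to Theorem \ref{Theorem 1}, since the direct argument is cleaner than passing through the quotient ring $R/\phi(P)$.
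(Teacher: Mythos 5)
Your proposal is correct and matches the paper's own argument exactly: the same case split on whether $xy\in\phi(P)$, using the graded $\phi$-$r$-ideal hypothesis in one case and the graded $r$-ideal hypothesis on $\phi(P)$ (plus $\phi(P)\subseteq P$) in the other, with the converse being immediate. No further comment is needed.
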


\begin{proof}Suppose that $P$ is a graded $\phi$-$r$-ideal of $R$. Let $x, y\in h(R)$ such that $xy\in P$ and $x\in r(R)$. If $xy\notin \phi(P)$, then since $P$ is a graded $\phi$-$r$-ideal, we have $y\in P$, as desired. If $xy\in \phi(P)$, then as $\phi(P)$ is a graded $r$-ideal, we conclude that $y\in \phi(P)\subseteq P$. The converse is clear.
\end{proof}

\begin{thm}\label{Theorem 4} Let $R$ be a graded ring and $P$ be a proper graded ideal of $R$ with $Grad(\phi(P))=\phi(Grad(P))$. If $P$ is a
graded $\phi$-$r$-ideal of $R$, then $Grad(P)$ is a graded $\phi$-$r$-ideal of $R$.
\end{thm}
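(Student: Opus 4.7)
The plan is to take an arbitrary pair of homogeneous elements witnessing the $\phi$-$r$-ideal condition for $Grad(P)$ and reduce it, by taking a suitable power, to the corresponding condition for $P$, which we may invoke by hypothesis. Concretely, I would start with $x, y \in h(R)$ satisfying $xy \in Grad(P) - \phi(Grad(P))$ and $Ann(x) = \{0\}$, and try to produce $y \in Grad(P)$.

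Since $xy$ is homogeneous and lies in $Grad(P)$, there is some positive integer $n$ with $(xy)^{n} = x^{n} y^{n} \in P$. The first observation I would record is that $Ann(x^{n}) = \{0\}$: indeed, if $a x^{n} = 0$ then $(a x^{n-1})x = 0$, and repeated application of $Ann(x)=\{0\}$ forces $a=0$. So $x^{n}$ is still a regular homogeneous element.

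Next I would argue that $x^{n} y^{n} \notin \phi(P)$. This is where the hypothesis $Grad(\phi(P)) = \phi(Grad(P))$ is used: if $x^{n} y^{n} \in \phi(P)$, then $xy \in Grad(\phi(P)) = \phi(Grad(P))$, contradicting the assumption that $xy \notin \phi(Grad(P))$. Hence $x^{n} y^{n} \in P - \phi(P)$.

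Now the hypothesis that $P$ is a graded $\phi$-$r$-ideal applies to the homogeneous pair $(x^{n}, y^{n})$, giving $y^{n} \in P$ and therefore $y \in Grad(P)$, as required. I do not anticipate a serious obstacle here; the only subtlety is keeping the graded radical characterization clean (for a homogeneous element $z$, $z \in Grad(I)$ iff $z^{m} \in I$ for some $m$, which is stated in the introduction), and invoking the assumption $Grad(\phi(P)) = \phi(Grad(P))$ at the single point where we need to rule out that $x^{n} y^{n}$ accidentally lands in $\phi(P)$.
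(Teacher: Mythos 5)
Your proof is correct and follows essentially the same route as the paper's: pass from $xy\in Grad(P)$ to $(xy)^{k}=x^{k}y^{k}\in P$, note $x^{k}$ remains regular, use $Grad(\phi(P))=\phi(Grad(P))$ to conclude $x^{k}y^{k}\notin\phi(P)$, and apply the $\phi$-$r$-ideal hypothesis to get $y^{k}\in P$, i.e.\ $y\in Grad(P)$. Your explicit contradiction argument for the step $x^{k}y^{k}\notin\phi(P)$ just spells out what the paper leaves implicit.
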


\begin{proof}Let $x, y\in h(R)$ such that $xy\in Grad(P)-\phi(Grad(P))$ and $x\in r(R)$. As $xy\in Grad(P)$, there exists a positive integer $k$ such that $(xy)^{k} = x^{k}y^{k}\in P$. On the other hand, $xy\notin \phi(Grad(P)) = Grad(\phi(P))$, one can say that $x^{k}y^{k}\notin \phi(P)$. Also, since $x\in r(R)$, it is obvious that $x^{k}\in r(R)$. As $P$ is a graded $\phi$-$r$-ideal, we get $y^{k}\in P$, i.e., $y\in Grad(P)$.
\end{proof}

\begin{prop}\label{Proposition 3} Let $R$ be a graded ring and $P$ be a proper graded ideal of $R$ such that $\phi(P)$ is a graded $r$-ideal of $R$. If $P$ is a graded $\phi$-$r$-ideal of $R$, then $P\bigcap h(R)\subseteq Zd(R)$.
\end{prop}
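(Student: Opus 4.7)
The plan is to reduce to the ungeneralized (classical) graded $r$-ideal case via Theorem \ref{Theorem 3}, and then argue by contradiction using the definition of a graded $r$-ideal together with the fact that $1\in h(R)$.

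First, I invoke Theorem \ref{Theorem 3}. Since $\phi(P)$ is assumed to be a graded $r$-ideal of $R$ and $P$ is a graded $\phi$-$r$-ideal of $R$, Theorem \ref{Theorem 3} immediately yields that $P$ itself is a graded $r$-ideal of $R$. This is the key simplification: the generalized hypothesis collapses to the classical $r$-ideal property.

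Next, let $a\in P\cap h(R)$ and suppose for contradiction that $a\notin Zd(R)$, i.e., $Ann(a)=\{0\}$, so that $a\in r(R)$. Since $1\in R_{e}\subseteq h(R)$, I can set $x=a$ and $y=1$: both are homogeneous, $xy=a\in P$, and $Ann(x)=\{0\}$. Applying the graded $r$-ideal definition to $P$ forces $y=1\in P$, contradicting the assumption that $P$ is a proper graded ideal. Hence $a\in Zd(R)$, and since $a\in P\cap h(R)$ was arbitrary, $P\cap h(R)\subseteq Zd(R)$.

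There is essentially no obstacle here: the whole argument rests on the prior theorem, and the verification reduces to the trivial observation that multiplying any element of a proper $r$-ideal by the unity of $R$ witnesses that the element must be a zero divisor. One could alternatively argue directly from Proposition \ref{Proposition 2} by showing $(P\cap h(R))-\phi(P)\subseteq Zd(R)$ and, separately, $\phi(P)\cap h(R)\subseteq Zd(R)$ by applying the same "$x=a$, $y=1$" trick to the graded $r$-ideal $\phi(P)$, but routing the proof through Theorem \ref{Theorem 3} is cleaner and avoids case-splitting.
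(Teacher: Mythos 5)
Your proof is correct and is essentially the paper's argument: the paper also takes $a\in P\cap h(R)$ with $Ann(a)=\{0\}$, writes $a\cdot 1\in P$, and derives $1\in P$; the only difference is that the paper inlines the case split ($a\notin\phi(P)$ versus $a\in\phi(P)$, using the $\phi$-$r$-ideal property in the first case and the $r$-ideal property of $\phi(P)$ in the second) rather than citing Theorem \ref{Theorem 3}, whose proof contains exactly that case split. Routing through Theorem \ref{Theorem 3} as you do is legitimate since that theorem precedes this proposition and has identical hypotheses.
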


\begin{proof}Suppose that $P\bigcap h(R)\nsubseteq Zd(R)$. Then there is $a\in P\bigcap h(R)$ but $a\notin Zd(R)$. So, $a, 1\in h(R)$ such that $a.1\in P$. If $a\notin \phi(P)$, then $1\in P$ since $P$ is a graded $\phi$-$r$-ideal, which is a contradiction. If $a\in \phi(P)$, then we obtain $1\in \phi(P)\subseteq P$ since $\phi(P)$ is a graded $r$-ideal, which is a contradiction.
\end{proof}

\begin{prop}\label{Proposition 4} Let $R$ be a graded ring and $P$ be a graded prime ideal of $R$ such that $\phi(P)$ is a graded $r$-ideal of $R$. Then
$P$ is a graded $\phi$-$r$-ideal if and only if $P\bigcap h(R)\subseteq Zd(R)$.
\end{prop}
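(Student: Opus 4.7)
The forward direction is essentially immediate from Proposition \ref{Proposition 3}: if $P$ is a graded $\phi$-$r$-ideal and $\phi(P)$ is a graded $r$-ideal of $R$, then Proposition \ref{Proposition 3} directly yields $P\bigcap h(R)\subseteq Zd(R)$, so there is nothing new to prove on that side. No use of the graded primeness of $P$ is needed here.

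For the converse, assume $P\bigcap h(R)\subseteq Zd(R)$ and take $x, y\in h(R)$ with $xy\in P-\phi(P)$ and $Ann(x)=\{0\}$. The plan is to apply the graded prime hypothesis on $P$ to $xy\in P$: this gives $x\in P$ or $y\in P$. If $y\in P$ we are done, so suppose $x\in P$. Then $x\in P\bigcap h(R)\subseteq Zd(R)$, meaning $x$ is a homogeneous zero divisor of $R$, which directly contradicts $Ann(x)=\{0\}$ (i.e., $x\in r(R)$). Hence $y\in P$, proving that $P$ is a graded $\phi$-$r$-ideal.

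There is no real obstacle: the result is just a synthesis of graded primeness with the definitional fact that elements of $r(R)$ cannot be zero divisors. The only subtle point worth double-checking is that the hypothesis $Ann(x)=\{0\}$ genuinely forces $x\notin Zd(R)$ in the present (unital, commutative) setting, which is immediate since any nontrivial relation $ax=0$ with $a\neq 0$ would place a nonzero element in $Ann(x)$. Note also that the hypothesis that $\phi(P)$ is a graded $r$-ideal is not used in the converse; it only enters in the forward direction through Proposition \ref{Proposition 3}.
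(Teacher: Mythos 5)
Your proof is correct and follows essentially the same route as the paper: the forward direction is quoted from Proposition \ref{Proposition 3}, and the converse uses graded primeness of $P$ to get $x\in P$ or $y\in P$, ruling out $x\in P$ because $x\in r(R)$ cannot be a zero divisor. Nothing to add.
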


\begin{proof}Suppose that $P\bigcap h(R)\subseteq Zd(R)$. Let $x, y\in h(R)$ such that $xy\in P-\phi(P)$ and $x\in r(R)$. Since $P$ is graded prime, either $x\in P$ or $y\in P$. If $x\in P$, then $x\in Zd(R)$ by assumption, which is a contradiction since $x\in r(R)$. Consequently, $y\in P$, as desired. The converse is clear by Proposition \ref{Proposition 3}.
\end{proof}

\begin{prop}\label{Proposition 5}Let $R$ be a graded ring, $P$ be a proper graded ideal of $R$ and $a\in h(R)-P$. Suppose that $(\phi(P) : a)\subseteq \phi((P : a))$. If $P$ is a graded $\phi$-$r$-ideal of $R$, then $(P : a)$ is a graded $\phi$-$r$-ideal of $R$.
\end{prop}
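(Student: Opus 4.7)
The plan is to verify the defining property of a graded $\phi$-$r$-ideal directly for $(P:a)$, using the containment $(\phi(P):a)\subseteq \phi((P:a))$ exactly as the safeguard that keeps us in the ``outside $\phi$'' region when we transport the hypothesis from $(P:a)$ back to $P$.

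First, I would record two preliminary facts. Since $a\in h(R)$, the ideal $(P:a)$ is a graded ideal of $R$ by (\cite{Dawwas Yildiz}, Lemma 2), and it is proper: if $1\in(P:a)$ then $a\in P$, contradicting $a\in h(R)-P$. So the statement ``$(P:a)$ is a graded $\phi$-$r$-ideal'' makes sense.

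Next I would take the core step. Let $x,y\in h(R)$ with $xy\in(P:a)-\phi((P:a))$ and $Ann(x)=\{0\}$. From $xy\in(P:a)$ we get $x(ya)\in P$, and note $ya\in h(R)$ because $y,a$ are homogeneous. To apply the graded $\phi$-$r$-ideal hypothesis on $P$, I need $x(ya)\notin\phi(P)$. Suppose for contradiction $x(ya)=(xy)a\in\phi(P)$. Then $xy\in(\phi(P):a)\subseteq\phi((P:a))$ by the assumed containment, contradicting $xy\notin\phi((P:a))$. Hence $x(ya)\in P-\phi(P)$ with $Ann(x)=\{0\}$, and since $P$ is a graded $\phi$-$r$-ideal we conclude $ya\in P$, i.e., $y\in(P:a)$. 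This shows $(P:a)$ is a graded $\phi$-$r$-ideal of $R$.

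The only genuinely nontrivial point is the contrapositive use of the hypothesis $(\phi(P):a)\subseteq\phi((P:a))$; everything else is bookkeeping about homogeneity, gradedness of $(P:a)$, and properness. I do not foresee a real obstacle, since the argument parallels the ungraded ideal-quotient reduction but must be verified carefully on homogeneous elements, which is automatic here because $a$ and the witnesses $x,y$ are all taken in $h(R)$.
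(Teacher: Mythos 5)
Your argument is correct and is essentially the paper's own proof: both invoke (\cite{Dawwas Yildiz}, Lemma 2) for gradedness of $(P:a)$, pass from $xy\in(P:a)$ to $(xy)a\in P$, and use the containment $(\phi(P):a)\subseteq\phi((P:a))$ contrapositively to guarantee $(xy)a\notin\phi(P)$ before applying the graded $\phi$-$r$-ideal hypothesis on $P$. Your extra observation that $(P:a)$ is proper (since $a\notin P$) is a small but worthwhile detail the paper leaves implicit.
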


\begin{proof}By (\cite{Dawwas Yildiz}, Lemma 2), $(P:a)$ is a graded ideal of $R$. Let $b, c\in h(R)$ such that $bc\in (P : a)-\phi((P : a))$ and $b\in r(R)$. Then we get $bca\in P$ and $bca\notin \phi(P)$ by $(\phi(P) : a)\subseteq \phi((P : a))$. Since $b\in r(R)$ and $P$ is a graded $\phi$-$r$-ideal, we get $ca\in P$, i.e., $c\in (P : a)$, as needed.
\end{proof}

\begin{defn}Let $R$ be a graded ring and $P$ be a proper graded ideal of $R$. Then $P$ is said to be a graded strongly $\phi$-$r$-ideal of $R$ if for every graded ideals $I$ and $J$ of $R$ such that $IJ\subseteq P$, $IJ\nsubseteq \phi(P)$ and $Ann(I) = \{0\}$, we have $J\subseteq P$.
\end{defn}

\begin{prop}\label{Proposition 6} Let $R$ be a graded ring. Then every graded strongly $\phi$-$r$-ideal of $R$ is a graded $\phi$-$r$-ideal.
\end{prop}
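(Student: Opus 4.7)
The plan is to exploit the given element-level hypothesis by passing to the principal graded ideals it generates, and then invoke the strongly $\phi$-$r$-ideal hypothesis at the ideal level.

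Assume $P$ is a graded strongly $\phi$-$r$-ideal of $R$. To show that $P$ is a graded $\phi$-$r$-ideal, I take $x, y \in h(R)$ satisfying $xy \in P - \phi(P)$ and $Ann(x) = \{0\}$, and aim to conclude $y \in P$. The natural candidates for the ideals $I$ and $J$ required by the strong definition are the principal ideals $I = Rx$ and $J = Ry$. Because $x, y$ are homogeneous, both $Rx$ and $Ry$ are graded ideals of $R$ (each decomposes as $\bigoplus_{g \in G} R_g x$, respectively $\bigoplus_{g \in G} R_g y$, which lie in a single homogeneous component shifted by $\deg x$, respectively $\deg y$).

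Next, I would verify the three hypotheses of the strongly $\phi$-$r$-ideal definition. First, $IJ \subseteq P$: a typical generator of $IJ$ has the form $(ax)(by) = (ab)(xy)$, and since $xy \in P$ and $P$ is an ideal, each such product lies in $P$; thus the whole ideal $IJ$ is contained in $P$. Second, $IJ \nsubseteq \phi(P)$: taking $a = b = 1$ gives $xy \in IJ$, and by assumption $xy \notin \phi(P)$. Third, $Ann(I) = \{0\}$: if $r \in Ann(Rx)$, then in particular $r(1 \cdot x) = rx = 0$, so $r \in Ann(x) = \{0\}$; conversely $Ann(x) \subseteq Ann(Rx)$ because $rx = 0$ forces $r(ax) = a(rx) = 0$ for every $a \in R$. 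Hence $Ann(I) = Ann(x) = \{0\}$.

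Having checked all three conditions, I apply the strongly $\phi$-$r$-ideal property to conclude $J = Ry \subseteq P$, which yields $y = 1 \cdot y \in P$, as required. I do not expect any real obstacle here; the only point that requires a line of justification is the equality $Ann(Rx) = Ann(x)$, and the verification that the principal ideals generated by homogeneous elements are themselves graded, both of which are routine.
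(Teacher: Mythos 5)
Your proof is correct and follows essentially the same route as the paper: pass to the principal graded ideals $I=\langle x\rangle$ and $J=\langle y\rangle$, verify $IJ\subseteq P$, $IJ\nsubseteq\phi(P)$ and $Ann(I)=Ann(x)=\{0\}$, and apply the strong hypothesis. The only cosmetic difference is that the paper cites a lemma for the gradedness of principal ideals generated by homogeneous elements, whereas you justify it directly.
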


\begin{proof}Suppose that $P$ is a graded strongly $\phi$-$r$-ideal of $R$. Let $a, b\in h(R)$ such that $ab\in P-\phi(P)$ and $a\in r(R)$. Then $I=\langle a\rangle$ and $J=\langle b\rangle$ are graded ideals of $R$ by (\cite{Farzalipour}, Lemma 2.1), with $IJ\subseteq P$ and $IJ\nsubseteq \phi(P)$. Observe that $Ann(I) = \{0\}$. Suppose that for $0\neq u\in R$, $uI= \{0\}$. This means that for all $s\in R$, $usa = 0$. Since $a\in r(R)$, we obtain $us = 0$ for all $s\in R$. This implies $u = 0$, a contradiction. Thus as $P$ is graded strongly $\phi$-$r$-ideal, $J\subseteq P$, so $b\in P$.
\end{proof}

\begin{thm}\label{Theorem 6} Let $R$ be a graded ring and $P$ be a graded $\phi$ $r$-ideal of $R$. Suppose that $\phi(P)$ is a graded $r$-ideal of $R$. Assume that $I$ and $J$ are graded ideals of $R$ such that $IJ\subseteq P$, $IJ\nsubseteq \phi(P)$ and $Ann(I) = \{0\}$. If $Ann(I)=Ann(c)$ for some $c\in I\bigcap h(R)$, then $J\subseteq P$.
\end{thm}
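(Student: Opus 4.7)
The plan is to test membership of elements of $J$ in $P$ one homogeneous component at a time. By hypothesis there exists $c\in I\bigcap h(R)$ with $Ann(c)=Ann(I)=\{0\}$, which places $c$ in $r(R)$.

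Fix an arbitrary $y\in J\bigcap h(R)$. Since $c\in I$ and $y\in J$, the product $cy$ lies in $IJ\subseteq P$, and both $c$ and $y$ are homogeneous. I would split into two cases according to whether $cy\in\phi(P)$. If $cy\notin\phi(P)$, then $cy\in P-\phi(P)$ together with $c\in r(R)$, and the graded $\phi$-$r$-ideal hypothesis on $P$ yields $y\in P$. If instead $cy\in\phi(P)$, then the hypothesis that $\phi(P)$ is itself a graded $r$-ideal together with $c\in r(R)$ forces $y\in\phi(P)\subseteq P$. Either way $y\in P$, and since $J$ is a graded ideal each of its elements decomposes as a finite sum of homogeneous components in $J\bigcap h(R)$, so $J\subseteq P$ follows at once.

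The main subtlety worth flagging is the need to reduce to a homogeneous $y$ before invoking either ideal property, since the definitions of graded $r$-ideal and graded $\phi$-$r$-ideal are stated only for homogeneous inputs. It is also worth noting that the listed hypothesis $IJ\nsubseteq\phi(P)$ does not actually enter the argument above: one could bypass the case split entirely by appealing to Theorem \ref{Theorem 3}, which under the standing assumption that $\phi(P)$ is a graded $r$-ideal promotes $P$ to a genuine graded $r$-ideal of $R$, after which $cy\in P$ and $c\in r(R)$ immediately give $y\in P$.
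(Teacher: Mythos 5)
Your proof is correct and is essentially the paper's own argument: the paper proceeds by contradiction, picking a homogeneous component $b_g\in J$ with $b_g\notin P$ and running the same case split on whether $cb_g\in\phi(P)$, using the graded $\phi$-$r$-ideal property of $P$ in one case and the graded $r$-ideal property of $\phi(P)$ in the other. Your observations that the hypothesis $IJ\nsubseteq\phi(P)$ is never used and that Theorem \ref{Theorem 3} gives a shortcut are both accurate (the paper's proof does not use that hypothesis either), but the underlying argument is the same.
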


\begin{proof}Suppose that $J\nsubseteq P$. Then there is $0 \neq b\in J$ with $b\notin P$, and then there is $g\in G$ such that $b_{g}\notin P$. Note that, $b_{g}\in J$ as $J$ is a graded ideal. On the other hand, since $Ann(I) = Ann(c)$, $c\in r(R)$, as $Ann(I) = \{0\}$. Consider $cb_{g}\in P$. If $cb_{g}\notin \phi(P)$, then since $P$ is a graded $\phi$-$r$-ideal, we obtain $b_{g}\in P$, a contradiction. If $cb_{g}\in \phi(P)$, then as $\phi(P)$ is a graded $r$-ideal, again we conclude $b_{g}\in P$, a contradiction. Thus it must be $J\subseteq P$.
\end{proof}

Let $R$ be a graded ring and $P$ be a graded ideal of $R$. Then $R/P$ is a graded ring by $(R/P)_{g}=(R_{g}+P)/P$ for all $g\in G$ (\cite{Nastasescue}). Define $\phi_{P} : GI(R/P)\rightarrow GI(R/P)\bigcup\{\emptyset\}$ by $\phi_{P} (I/P) =(\phi(I) + P)/P$ for every graded ideal $I$ of $R$ with $P\subseteq I$ and $\phi_{P} (I/P) =\emptyset$ if $\phi(I) =\emptyset$. Notice that $\phi_{P} (I/P)\subseteq I/P$.

\begin{thm}\label{Theorem 7} Let $R$ be a graded ring and $P\subseteq r(R)$ be a graded ideal of $R$ contained in a graded ideal $I$ of $R$. If $I$ is a
graded $\phi$-$r$-ideal of $R$, then $I/P$ is a graded $\phi_{P}$-$r$-ideal of $R/P$.
\end{thm}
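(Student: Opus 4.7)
The plan is to take an arbitrary pair $x+P,\,y+P\in h(R/P)$ witnessing the defining condition for $I/P$ being a graded $\phi_P$-$r$-ideal in $R/P$, lift to homogeneous representatives $x,y\in h(R)$, translate this into a witness for $I$ being a graded $\phi$-$r$-ideal in $R$, and conclude $y\in I$, hence $y+P\in I/P$.

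First, since $(R/P)_g=(R_g+P)/P$ by the standard quotient grading, I can choose $x, y\in h(R)$ representing the given cosets. Then I would verify $xy\in I-\phi(I)$: the hypothesis $xy+P\in I/P$ gives $xy\in I$ because $P\subseteq I$; and if $xy\in\phi(I)$, then $xy+P\in(\phi(I)+P)/P=\phi_P(I/P)$, contradicting the assumption $(x+P)(y+P)\notin\phi_P(I/P)$. (The degenerate case $\phi(I)=\emptyset$, for which $\phi_P(I/P)=\emptyset$, is handled identically.)

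The hard step is transferring $Ann(x+P)=\{0+P\}$ in $R/P$ back to $Ann(x)=\{0\}$ in $R$; this is exactly where the hypothesis $P\subseteq r(R)$ is indispensable. First I would note that $x\neq 0$: since $I$ is proper (being a graded $\phi$-$r$-ideal) and $P\subseteq I$, the ring $R/P$ is nontrivial, and hence $Ann(0+P)=R/P\neq\{0+P\}$ forces $x\neq 0$. Next, if $a\in R$ satisfies $ax=0$, then $(a+P)(x+P)=0+P$, so $a+P\in Ann(x+P)=\{0+P\}$, i.e., $a\in P\subseteq r(R)$. If $a\neq 0$ then $Ann(a)=\{0\}$, but $x\in Ann(a)$ together with $x\neq 0$ gives a contradiction; hence $a=0$, and $Ann(x)=\{0\}$.

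With the lifted data $x,y\in h(R)$, $xy\in I-\phi(I)$, and $x\in r(R)$ in hand, applying the hypothesis that $I$ is a graded $\phi$-$r$-ideal of $R$ yields $y\in I$, so $y+P\in I/P$, as required.
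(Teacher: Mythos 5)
Your proposal is correct and follows essentially the same route as the paper's proof: lift homogeneous representatives, check $xy\in I-\phi(I)$, and use $P\subseteq r(R)$ to transfer $Ann(x+P)=\{0+P\}$ down to $Ann(x)=\{0\}$ via the same contradiction ($a\in P\subseteq r(R)$ yet $0\neq x\in Ann(a)$). Your explicit remark that $x\neq 0$ is a small extra care the paper leaves implicit, but the argument is the same.
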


\begin{proof}By (\cite{Saber}, Lemma 3.2), $I/P$ is a graded ideal of $R/P$. Let $x + P, y + P\in h(R/P)$ such that $(x + P)(y + P)\in I/P-\phi_{P} (I/P)$ and $x + P\in r(R/P)$. Then $x, y\in h(R)$ such that $xy\in I-\phi(I)$. Also, since $x + P\in r(R/P)$, $x\in r(R)$. Indeed, if $x\notin r(R)$, there is
$0\neq a\in R$ such that $ax = 0$. Then we have $(x+P)(a +P) = 0+P$. Since $x+P\in r(R/P)$, $a + P = 0 + P$, i.e., $a\in P\subseteq r(R)$. This gives us a contradiction as $0\neq x\in Ann(a)$. Thus, as $I$ is a graded $\phi$-$r$-ideal, $y\in I$, so $y + P\in I/P$.
\end{proof}

\begin{prop}\label{Proposition 7} Let $R$ be a graded ring and $P$ be a graded $r$-ideal of $R$ contained in a graded ideal $I$ of $R$. If $I/P$ is a graded $r$-ideal of $R/P$, then $I$ is a graded $\phi$-$r$-ideal of $R$.
\end{prop}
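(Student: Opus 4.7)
My plan is to mimic the two-step structure of Theorem \ref{Theorem 7}, but with the role of the hypothesis $P\subseteq r(R)$ replaced by the stronger assumption that $P$ itself is a graded $r$-ideal. I will start from homogeneous $x,y\in h(R)$ satisfying $xy\in I-\phi(I)$ with $Ann(x)=\{0\}$, pass to the quotient $R/P$, verify that the hypotheses of ``$I/P$ is a graded $r$-ideal of $R/P$'' apply, and read off $y\in I$. The set-theoretic fact that $I-\phi(I)\subseteq I$ means the function $\phi$ never has to be touched again after the first line, which is why no condition on $\phi$ is imposed in the statement.

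The concrete steps I would carry out are as follows. First, by (\cite{Saber}, Lemma 3.2), $I/P$ is a graded ideal of $R/P$, and $x+P,\ y+P\in h(R/P)$ with $(x+P)(y+P)=xy+P\in I/P$. Second, I need to verify that $x+P\in r(R/P)$, i.e.\ that $Ann(x+P)=\{0+P\}$. For this, pick any $a\in R$ with $(a+P)(x+P)=0+P$, so that $ax\in P$. Since $x$ is homogeneous of some degree $g_{0}$ and $a=\sum_{g\in G}a_{g}$, the expression $ax=\sum_{g\in G}a_{g}x$ is already the homogeneous decomposition of $ax$. Because $P$ is a graded ideal, each $a_{g}x\in P$; because $P$ is a graded $r$-ideal and $Ann(x)=\{0\}$, each $a_{g}\in P$; hence $a\in P$ and $a+P=0+P$. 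Thus $x+P\in r(R/P)$. Third, applying the hypothesis that $I/P$ is a graded $r$-ideal to the homogeneous element $x+P\in r(R/P)$ and $(x+P)(y+P)\in I/P$ yields $y+P\in I/P$, i.e.\ $y\in I$.

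The only delicate point is the middle step, the verification that $Ann(x+P)=\{0+P\}$ in $R/P$. Note that $Ann(x+P)$ is the full (not homogeneous) annihilator, so I must handle arbitrary $a\in R$, not just elements of $h(R)$. The key move is that, because $x$ is homogeneous, multiplication by $x$ is compatible with the homogeneous decomposition of $a$, and then the fact that $P$ is a graded ideal lets me reduce the problem to homogeneous components, where the graded $r$-ideal hypothesis on $P$ finishes the job. This is exactly the graded twist on the classical ``lift of nonzero-divisor to the quotient'' argument, and it is also what forces us to use that $P$ is a graded $r$-ideal (rather than merely $P\subseteq r(R)$, which would not suffice here).
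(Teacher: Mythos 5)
Your proof is correct and follows essentially the same route as the paper's: pass to $R/P$, check that $Ann(x)=\{0\}$ forces $Ann(x+P)=\{0+P\}$ using that $P$ is a graded $r$-ideal, and then invoke the hypothesis on $I/P$. If anything you are slightly more careful than the paper at the one delicate point, since you decompose the (possibly non-homogeneous) annihilating element $a$ into homogeneous components before applying the graded $r$-ideal property of $P$, and you correctly observe that the paper's preliminary case split on whether $xy\in P$ is unnecessary.
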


\begin{proof}Let $a, b\in h(R)$ such that $ab\in I-\phi(I)$ and $a\in r(R)$. If $ab\in P$, then as $P$ is a graded $r$-ideal, we get $b\in P\subseteq I$, as desired. Suppose that $ab\in I-P$. Then $a+P, b+P\in h(R/P)$ such that $ab + P = (a + P)(b + P)\in I/P$. Also, as $a\in r(R)$, $a + P\in r(R/P)$. Indeed, if $a + P\notin r(R/P)$, then there is $0\neq x+ P\in R/P$ such that $(a +P)(x +P) = 0 +P$. This means that $ax\in P$. But as $a\in r(R)$ and $P$ is a graded $r$-ideal, we see $x\in P$, a contradiction. Therefore, since $I/P$ is a graded $r$-ideal of $R/P$ and $a + P\in r(R/P)$, we see $b + P\in I/P$, so $b\in I$.
\end{proof}

\begin{defn}Let $R$ be a graded ring and $\phi: GI(R)\rightarrow GI(R)\bigcup\{\emptyset\}$ be a function. A proper graded ideal $P$ of $R$ is called a graded $\phi$-pure ideal of $R$ if for every $x\in \left(P\bigcap h(R)\right)-\phi(P)$, there is $y\in P\bigcap h(R)$ such that $x = xy$.
\end{defn}

\begin{prop}A proper graded ideal $P$ of $R$ is a graded $\phi$-pure ideal if and only if for every nonzero $x\in \left(P\bigcap h(R)\right)-\phi(P)$, there is $y\in P_{e}$ such that $x = xy$.
\end{prop}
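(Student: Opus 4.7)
The plan is to verify both implications directly from the definition of a graded $\phi$-pure ideal, with the key tool being the uniqueness of the homogeneous decomposition in the grading $R = \bigoplus_{g \in G} R_g$.

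First I would handle the forward direction $(\Rightarrow)$. Assume $P$ is a graded $\phi$-pure ideal and pick an arbitrary nonzero $x \in (P \cap h(R)) - \phi(P)$. By the definition, there exists $y \in P \cap h(R)$ with $x = xy$. Since both elements are homogeneous, write $x \in R_g$ and $y \in R_h$ for some $g, h \in G$. Then $xy \in R_{gh}$, while $x \in R_g$; comparing the components of $x = xy$ under the direct-sum decomposition and using $x \neq 0$ forces $g = gh$, hence $h = e$. Therefore $y \in P \cap R_e = P_e$, which is exactly the conclusion required by the proposition.

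Next I would dispatch $(\Leftarrow)$, which is almost immediate. Let $x \in (P \cap h(R)) - \phi(P)$ be arbitrary. If $x \neq 0$, the hypothesis directly supplies some $y \in P_e \subseteq P \cap h(R)$ with $x = xy$, so the condition of the definition holds. If $x = 0$ (which can only occur when $\phi(P) = \emptyset$, since otherwise $0 \in \phi(P)$), simply take $y = 0 \in P \cap h(R)$; then $xy = 0 = x$ trivially.

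There is no real obstacle in this proof: the entire mathematical content lives in the degree-matching step of the forward direction, where homogeneity of $x$ and $y$ combined with $x \neq 0$ pins $y$ into the identity component $R_e$. The reverse direction is essentially just the inclusion $P_e \subseteq P \cap h(R)$ together with a harmless check at $x = 0$.
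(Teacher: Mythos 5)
Your proof is correct and follows essentially the same route as the paper's: the forward direction uses the degree comparison $0\neq x\in R_{g}\bigcap R_{gh}$ to force $h=e$, and the converse splits into the trivial $x=0$ case and the case handled directly by the hypothesis via $P_{e}\subseteq P\bigcap h(R)$. Your parenthetical observation that $x=0$ can only arise when $\phi(P)=\emptyset$ is a harmless extra remark not needed for the argument.
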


\begin{proof}Suppose that $P$ is a graded $\phi$-pure ideal of $R$. Let $x\in \left(P\bigcap h(R)\right)-\phi(P)$ be nonzero. Then there is $y\in P\bigcap h(R)$ such that $x = xy$. Now, as $x, y\in h(R)$, $x\in R_{g}$ and $y\in R_{h}$ for some $g, h\in G$, and then $x=xy\in R_{g}R_{h}\subseteq R_{gh}$. So, $0\neq x\in R_{g}\bigcap R_{gh}$ which implies that $gh=g$, and hence $h=e$. Thus, $y\in P\bigcap R_{e}=P_{e}$. Conversely, let $x\in \left(P\bigcap h(R)\right)-\phi(P)$. If $x=0$, then $y=0\in P\bigcap h(R)$ such that $x=xy$. Suppose that $x\neq0$. Then by assumption, there is $y\in P_{e}=P\bigcap R_{e}\subseteq P\bigcap h(R)$ such that $x=xy$. Hence, $P$ is a graded $\phi$-pure ideal of $R$.
\end{proof}

\begin{defn}Let $R$ be a graded ring and $\phi: GI(R)\rightarrow GI(R)\bigcup\{\emptyset\}$ be a function. A proper graded ideal $P$ of $R$ is called a graded $\phi$-von Neumann regular ideal of $R$ if for every $x\in \left(P\bigcap h(R)\right)-\phi(P)$, there is $y\in P\bigcap h(R)$ such that $x = x^{2}y$.
\end{defn}

\begin{prop}A proper graded ideal $P$ of $R$ is a graded $\phi$-von Neumann regular ideal of $R$ if and only if for every nonzero $x\in \left(P\bigcap h(R)\right)-\phi(P)$, there are $g\in G$ and $y\in P_{g^{-1}}$ with $x = x^{2}y$.
\end{prop}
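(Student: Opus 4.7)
The plan is to follow the same template as the proof of the preceding proposition characterizing graded $\phi$-pure ideals, with one additional degree computation to accommodate the factor $x^{2}$ in place of $x$.

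First I would assume that $P$ is a graded $\phi$-von Neumann regular ideal of $R$ and fix a nonzero $x \in \left(P\bigcap h(R)\right)-\phi(P)$. The definition supplies some $y \in P\bigcap h(R)$ with $x = x^{2}y$. Writing $x \in R_{g}$ and $y \in R_{h}$ for suitable $g, h \in G$, I would compute $x^{2} \in R_{g^{2}}$ and hence $x^{2}y \in R_{g^{2}h}$. The key step is a homogeneity comparison: since $0 \neq x$ lies simultaneously in $R_{g}$ and in $R_{g^{2}h}$, and distinct graded components intersect only in $0$, the degrees must coincide, forcing $g = g^{2}h$ and therefore $h = g^{-1}$. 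This gives $y \in P \bigcap R_{g^{-1}} = P_{g^{-1}}$, as required.

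For the converse, I would take $x \in \left(P\bigcap h(R)\right) - \phi(P)$. When $x = 0$, the choice $y = 0 \in P\bigcap h(R)$ trivially yields $x = x^{2}y$, handling the edge case exactly as in the previous proposition. When $x \neq 0$, the hypothesis provides $g \in G$ and $y \in P_{g^{-1}} \subseteq P\bigcap h(R)$ with $x = x^{2}y$, which is precisely what the definition of a graded $\phi$-von Neumann regular ideal demands.

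The only real obstacle is the degree identification $g = g^{2}h \Rightarrow h = g^{-1}$; this step relies critically on $x$ being nonzero, which is exactly why the refined characterization is stated only for nonzero $x$. No heavier machinery than the uniqueness of the homogeneous decomposition is needed.
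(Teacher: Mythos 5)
Your proposal is correct and follows essentially the same argument as the paper's proof: the forward direction uses the degree comparison $x = x^{2}y \in R_{g^{2}h}$ together with $0 \neq x \in R_{g}\bigcap R_{g^{2}h}$ to force $h = g^{-1}$, and the converse splits into the trivial case $x = 0$ and the case $x \neq 0$ handled directly by the hypothesis. No gaps.
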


\begin{proof}Suppose that $P$ is a graded $\phi$-von Neumann regular ideal of $R$. Let $x\in \left(P\bigcap h(R)\right)-\phi(P)$ be nonzero. Then there is $y\in P\bigcap h(R)$ such that $x = x^{2}y$. Now, as $x, y\in h(R)$, $x\in R_{g}$ and $y\in R_{h}$ for some $g, h\in G$, and then $x=x^{2}y\in R_{g}R_{g}R_{h}\subseteq R_{g^{2}h}$. So, $0\neq x\in R_{g}\bigcap R_{g^{2}h}$ which implies that $g^{2}h=g$, and hence $h=g^{-1}$. Thus, $y\in P\bigcap R_{g^{-1}}=P_{g^{-1}}$. Conversely, let $x\in \left(P\bigcap h(R)\right)-\phi(P)$. If $x=0$, then $y=0\in P\bigcap h(R)$ such that $x=xy$. Suppose that $x\neq0$. Then by assumption, there are $g\in G$ and $y\in P_{g^{-1}}=P\bigcap R_{g^{-1}}\subseteq P\bigcap h(R)$ such that $x=xy$. Hence, $P$ is a graded $\phi$-von Neumann regular ideal of $R$.
\end{proof}

\begin{rem}We define the concepts in Remark \ref{1} for graded $\phi$-pure ideal and graded $\phi$-von Neuman regular ideal.
\end{rem}

Assume that $M$ is a left $R$-module. Then $M$ is said to be $G$-graded if
$M=\displaystyle\bigoplus_{g\in G}M_{g}$ with $R_{g}M_{h}\subseteq M_{gh}$ for
all $g,h\in G$ where $M_{g}$ is an additive subgroup of $M$ for all $g\in G$.
The elements of $M_{g}$ are called homogeneous of degree $g$. It is clear that
$M_{g}$ is an $R_{e}$-submodule of $M$ for all $g\in G$. We assume that
$h(M)=\displaystyle\bigcup_{g\in G}M_{g}$. Let $N$ be an $R$-submodule of a
graded $R$-module $M$. Then $N$ is said to be graded $R$-submodule if
$N=\displaystyle\bigoplus_{g\in G}(N\cap M_{g})$, i.e., for $x\in N$,
$x=\displaystyle\sum_{g\in G}x_{g}$ where $x_{g}\in N$ for all $g\in G$. It is
known that an $R$-submodule of a graded $R$-module need not be graded.

Let $M$ be an $R$-module. The idealization $R(+)M=\left\{  (r,m):r\in
R\mbox{ and }m\in M\right\}  $ of $M$ is a commutative ring with componentwise
addition and multiplication; $(x,m_{1})+(y,m_{2})=(x+y,m_{1}+m_{2})$ and
$(x,m_{1})(y,m_{2})=(xy,xm_{2}+ym_{1})$ for each $x,y\in R$ and $m_{1}%
,m_{2}\in M$. Let $G$ be an abelian group and $M$ be a $G$-graded $R$-module.
Then $X=R(+)M$ is $G$-graded by $X_{g}=R_{g}(+)M_{g}$ for all $g\in G$. Note
that, $X_{g}$ is an additive subgroup of $X$ for all $g\in G$. Also, for
$g,h\in G$, $X_{g}X_{h}=(R_{g}(+)M_{g})(R_{h}(+)M_{h})=(R_{g}R_{h},R_{g}%
M_{h}+R_{h}M_{g})\subseteq(R_{gh},M_{gh}+M_{hg})\subseteq(R_{gh}%
,M_{gh})=X_{gh}$ as $G$ is abelian\ \cite{RaTeShKo}. If $P$ is
an ideal of $R$ and $N$ is an $R$-submodule of $M$ such that $PM\subseteq N$,
then $P(+)N$ is a graded ideal of $R(+)M$ if and only if $P$ is a graded ideal
of $R$ and $N$ is a graded $R$-submodule of $M$ (\cite{RaTeShKo}, Proposition 3.3). Note that
\begin{center}
$Zd(R(+)M) = \left\{(a,m) : a\in Zd(R)\bigcup Zd(M)\right\}$, where $Zd(M) = \left\{a\in R : am = 0 \mbox{ for some }0\neq m\in M\right\}$.
\end{center}
The authors in\cite{RaTeShKo} determined the certain classes of graded ideals such as graded
maximal ideal, graded prime ideals, graded primary ideals, graded quasi
primary ideals, graded 2-absorbing ideals and graded 2-absorbing quasi primary
ideals of graded idealization $R(+)M$. Now, we investigate the graded
$\phi$-$r$-ideals in $R(+)M$.

\begin{thm}\label{Theorem 8} Let $G$ be an abelian group and $M$ be a $G$-graded $R$-module such that $Zd(R) = Zd(M)$. Let $\phi_{1}: GI(R)\rightarrow GI(R)\bigcup\{\emptyset\}$ and $\phi_{2}: GI(R(+)M)\rightarrow GI(R(+)M)\bigcup\{\emptyset\}$ be two function such that $\phi_{2}(P(+)M) =  \phi_{1}(P)(+)M$ for a proper graded ideal $P$ of $R$. If $P(+)M$ is a graded $\phi_{2}$-$r$-ideal of $R(+)M$, then $P$ is a graded $\phi_{1}$-$r$-ideal of $R$.
\end{thm}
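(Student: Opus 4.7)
The plan is to take a witness pair $(x,y)$ in $h(R)$ that tests whether $P$ is a graded $\phi_{1}$-$r$-ideal and lift it to a pair in $R(+)M$ via the natural embedding $r \mapsto (r,0)$, so that the graded $\phi_{2}$-$r$-ideal hypothesis on $P(+)M$ can be applied directly. Concretely, I would start with $x,y\in h(R)$ satisfying $xy\in P-\phi_{1}(P)$ and $Ann_{R}(x)=\{0\}$, and aim to produce $y\in P$.

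First I would verify homogeneity and the membership conditions in $R(+)M$. If $x\in R_{g}$ and $y\in R_{h}$, then $(x,0)\in R_{g}(+)M_{g}=X_{g}$ and $(y,0)\in X_{h}$, so both are homogeneous. Their product is $(x,0)(y,0)=(xy,0)$, which sits in $P(+)M$ since $xy\in P$, and fails to sit in $\phi_{2}(P(+)M)=\phi_{1}(P)(+)M$ precisely because $xy\notin\phi_{1}(P)$. Thus $(x,0)(y,0)\in P(+)M-\phi_{2}(P(+)M)$.

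The key step, and the one place where the hypothesis $Zd(R)=Zd(M)$ is used, is to show $Ann_{R(+)M}((x,0))=\{(0,0)\}$. Suppose $(r,m)(x,0)=(rx,xm)=(0,0)$. From $rx=0$ and $Ann_{R}(x)=\{0\}$ one obtains $r=0$ immediately. For the second coordinate, $Ann_{R}(x)=\{0\}$ says $x\notin Zd(R)$; invoking $Zd(R)=Zd(M)$ gives $x\notin Zd(M)$, so the relation $xm=0$ forces $m=0$. Hence $(r,m)=(0,0)$, as required.

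With these three items in place, applying the graded $\phi_{2}$-$r$-ideal hypothesis for $P(+)M$ to the homogeneous pair $(x,0),(y,0)$ yields $(y,0)\in P(+)M$, and reading the first coordinate gives $y\in P$, completing the argument. The only genuinely substantive point is the annihilator calculation; everything else is a routine transfer through the idealization, and I expect the hypothesis $Zd(R)=Zd(M)$ is included precisely to make that step go through.
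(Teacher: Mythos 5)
Your proposal is correct and follows essentially the same route as the paper: embed the pair via $r\mapsto(r,0)$, note $(x,0)(y,0)\in P(+)M-\phi_{2}(P(+)M)$, and use $Zd(R)=Zd(M)$ to see $(x,0)\in r(R(+)M)$. You simply spell out the annihilator computation that the paper leaves as ``it is clear.''
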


\begin{proof}Let $x, y\in h(R)$ such that $xy\in P-\phi_{1}(P)$ and $x\in r(R)$. Since $\phi_{2}(P(+)M) = \phi_{1}(P)(+)M$, we have $(x, 0)(y, 0)\in P(+)M-\phi_{2}(P(+)M)$. Also, since $Zd(R) =Zd(M)$, it is clear that $(x, 0)\in r(R(+)M)$. This means that $(y, 0)\in P(+)M$, so $y\in P$, as needed.
\end{proof}

Suppose that $R$ is a graded ring. Let $S\subseteq h(R)$ be a multiplicative set. Then $S^{-1}R$ is a graded ring with $(S^{-1}R)_{g}=\left\{\frac{a}{s}:a\in R_{h}, s\in S\cap R_{hg^{-1}}\right\}$. Assume that $\phi:GI(R)\rightarrow GI(R)\bigcup\{\emptyset\}$ is a function. Define $\phi_{S}:GI(S^{-1}R)\rightarrow GI(S^{-1}R)\bigcup\{\emptyset\}$ by $\phi_{S}(I)=S^{-1}\phi(I\bigcap R)$ for every graded ideal $I$ of $S^{-1}R$, and $\phi_{S}(I)=\emptyset$ if $\phi(I\bigcap R)=\emptyset$. Note that, $\phi_{S}(I)\subseteq I$.

\begin{thm}\label{Theorem 9} Let $R$ be a graded ring, $\phi: GI(R)\rightarrow GI(R)\bigcup\{\emptyset\}$ be a function and $S$ be a multiplicative subset of $h(R)$ such that $S\subseteq r(R)$. If $P$ is a graded $\phi$-$r$-ideal of $R$ such that $S\bigcap P =\emptyset$ and $S^{-1}\phi(P)\subseteq \phi_{S}(S^{-1}P)$, then $S^{-1}P$ is a graded $\phi_{S}$-$r$-ideal of $S^{-1}R$. Moreover, if $S^{-1}P\neq S^{-1}\phi(P)$, then $S^{-1}P\bigcap R\subseteq Zd(R)$.
\end{thm}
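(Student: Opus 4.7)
The plan addresses the two assertions in order. For the main claim, I take arbitrary homogeneous fractions $x/s,\,y/t\in h(S^{-1}R)$ with $(x/s)(y/t)\in S^{-1}P-\phi_{S}(S^{-1}P)$ and $Ann(x/s)=\{0\}$, and I aim to descend the hypotheses to $R$ so the graded $\phi$-$r$-ideal property of $P$ can be invoked. Clearing denominators in $xy/(st)\in S^{-1}P$ produces some $s'\in S$ with $s'xy\in P$, and I rule out $s'xy\in\phi(P)$: otherwise $xy/(st)=(s'xy)/(s'st)\in S^{-1}\phi(P)\subseteq\phi_{S}(S^{-1}P)$, contradicting the hypothesis on the product. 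Next I transfer the annihilator condition to $R$: if $ax=0$ for some $0\neq a\in R$, then $(a/1)(x/s)=0$ forces $a/1\in Ann(x/s)=\{0\}$, so $ua=0$ for some $u\in S\subseteq r(R)$, yielding $a=0$ and thus $x\in r(R)$, hence also $s'x\in r(R)$. Applying the $\phi$-$r$-ideal hypothesis on $P$ to the homogeneous pair $(s'x,y)$ with $(s'x)y\in P-\phi(P)$ gives $y\in P$, so $y/t\in S^{-1}P$.

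For the moreover clause, since $S^{-1}P$ is a graded ideal I reduce to the case of a homogeneous $a\in R$ with $a/1\in S^{-1}P$, and I argue by contradiction: suppose $a\in r(R)$. The same lifting used above shows $a/1\in r(S^{-1}R)$. Using $(a/1)(1/1)=a/1\in S^{-1}P$, the first part handles the case $a/1\notin\phi_{S}(S^{-1}P)$ at once: it forces $1/1\in S^{-1}P$, giving some $s\in S\cap P$, a direct contradiction with $S\cap P=\emptyset$.

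The main obstacle is the remaining case $a/1\in\phi_{S}(S^{-1}P)$, which is where the hypothesis $S^{-1}P\neq S^{-1}\phi(P)$ must enter. My intention is to choose a homogeneous witness $p\in P$ and $v\in S$ with $p/v\in S^{-1}P\setminus S^{-1}\phi(P)$ (equivalently, $sp\notin\phi(P)$ for every $s\in S$), and to exploit $a\in r(R)$ together with the explicit definition $\phi_{S}(S^{-1}P)=S^{-1}\phi(S^{-1}P\cap R)$ to transport the failure of containment into $S^{-1}\phi(P)$ back to a failure in $\phi(P)$ at the level of $R$. This should produce a homogeneous pair in $R$ to which the graded $\phi$-$r$-ideal hypothesis on $P$ applies and yields the contradiction $1\in P$. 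Reconciling $\phi(P)$ with the auxiliary ideal $\phi(S^{-1}P\cap R)$ used in $\phi_{S}$ is the delicate step here, and is the place where I expect the most care to be required.
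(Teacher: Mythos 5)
Your proof of the first assertion is correct and is essentially the paper's argument: clear denominators to get $s'xy\in P$, use $S^{-1}\phi(P)\subseteq\phi_{S}(S^{-1}P)$ to rule out $s'xy\in\phi(P)$, and descend the regularity of $x/s$ to $x\in r(R)$. (The paper applies the $\phi$-$r$ property to the pair $(x,\,s'y)$ and concludes $s'y\in P$, whereas you absorb $s'$ into the first factor via $s'x\in r(R)$ and get $y\in P$ outright; both are fine, and your justification that $Ann(x/s)=\{0\}$ implies $x\in r(R)$ is actually more careful than the paper's.)

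The ``moreover'' clause, however, is not proved. You reduce to a homogeneous $a$ with $a/1\in S^{-1}P$ and $a\in r(R)$, dispose of the case $a/1\notin\phi_{S}(S^{-1}P)$, and then explicitly leave the case $a/1\in\phi_{S}(S^{-1}P)$ as an intention rather than an argument. This is a genuine gap, and the route you sketch faces a real obstruction: $\phi_{S}(S^{-1}P)$ is defined as $S^{-1}\phi(S^{-1}P\cap R)$, where $S^{-1}P\cap R$ need not equal $P$, and the hypothesis supplies only the one-way containment $S^{-1}\phi(P)\subseteq\phi_{S}(S^{-1}P)$; so from $a/1\in\phi_{S}(S^{-1}P)$ you cannot pull back to a statement about $\phi(P)$ in $R$, which is what your plan requires. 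The paper avoids the issue by never case-splitting on membership in $\phi_{S}(S^{-1}P)$ at all: for each homogeneous component $x_{g}$ of an element of $S^{-1}P\cap R$ it picks $s\in S$ with $x_{g}s\in P$ and splits on whether $x_{g}s\in\phi(P)$ \emph{in $R$}. If $x_{g}s\notin\phi(P)$, the $\phi$-$r$ property of $P$ applied to the pair $(x_{g},s)$ would force $s\in P$, contradicting $S\cap P=\emptyset$, so $x_{g}\in Zd(R)$; if $x_{g}s\in\phi(P)$, then $x_{g}\in S^{-1}\phi(P)\cap R$. This yields $S^{-1}P\cap R\subseteq Zd(R)\cup\bigl(S^{-1}\phi(P)\cap R\bigr)$, and only at that point is the hypothesis $S^{-1}P\neq S^{-1}\phi(P)$ invoked to discard the second alternative. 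To repair your write-up you should replace your final case analysis with this one (or otherwise supply the missing argument); as it stands, the second half of the theorem is asserted but not established.
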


\begin{proof}Let $\frac{x}{s}, \frac{y}{t}\in h(S^{-1}R)$ such that $\frac{x}{s}\frac{y}{t}\in S^{-1}P-\phi_{S}(S^{-1}P)$ and $\frac{x}{s}\in r(S^{-1}R)$.
Then there is $s_{1}\in S$ such that $s_{1}xy\in P$. Also, it is clear that since $\frac{x}{s}\in r(S^{-1}R)$, $x\in r(R)$. On the other hand, $\frac{x}{s}\frac{y}{t}\notin \phi_{S}(S^{-1}P)$ implies that $s_{2}xy\notin \phi_{S}(S^{-1}P)\bigcap R$ for all $s_{2}\in S$. By $S^{-1}\phi(P)\subseteq \phi_{S}(S^{-1}P)$, we have $s_{2}xy\notin \phi(P)$ for all $s_{2}\in S$. Thus $xs_{1}y\in P-\phi(P)$. Then as $P$ is a graded $\phi$-$r$-ideal, $s_{1}y\in P$. Hence $\frac{y}{t} = \frac{s_{1}y}{s_{1}t}\in S^{-1}P$, as needed. Moreover, assume that $S^{-1}P\neq S^{-1}\phi(P)$. Let $x\in S^{-1}P\bigcap R$. Then $x_{g}\in S^{-1}P\bigcap R$ for all $g\in G$ as $S^{-1}P$ is a graded ideal, and then for $g\in G$, there is $s\in S$ such that $x_{g}s\in P$. Also, $s\notin P$ by $S\bigcap P = \emptyset$. If $x_{g}s\notin \phi(P)$, then we obtain $x_{g}\notin r(R)$ since $s\notin P$. If $x_{g}s\in \phi(P)$, then $x_{g}\in S^{-1}\phi(P)\bigcap R$. Thus $S^{-1}P\bigcap R\subseteq Zd(R)\bigcup\left(S^{-1}\phi(P)\bigcap R\right)$. Then by our assumption $S^{-1}P\neq S^{-1}\phi(P)$, we obtain $S^{-1}P\bigcap R\subseteq Zd(R)$.
\end{proof}


\begin{thebibliography}{9}

\bibitem{Dawwas Bataineh} R. Abu-Dawwas and M. Bataineh, Graded $r$-ideals, Iranian Journal of Mathematical Sciences and Informatics, 14 (2) (2019), 1-8.

\bibitem{Dawwas Yildiz} R. Abu-Dawwas, E. Y{\i}ld{\i}z, \"{U}. Tekir and S. Ko\c{c}, On graded $1$-absorbing prime ideals, Sao Paulo Journal of Mathematical Sciences, (2021), https://doi.org/10.1007/s40863-021-00218-3.

\bibitem{Alshehry} A. S. Alshehry and R. Abu-Dawwas, On graded $\phi$-prime submodules, arXiv:2102.04155, submitted.

\bibitem{Alshehry 2} A. S. Alshehry and R. Abu-Dawwas, On generalizations of graded $2$-absorbing and graded $2$-absorbing primary submodules, arXiv:2102.09108, submitted.

\bibitem{Atani Prime} S. E. Atani, On graded weakly prime ideals, Turkish Journal of Mathematics, 30 (2006), 351-358.

\bibitem{Farzalipour} F. Farzalipour, P. Ghiasvand, On the union of graded prime submodules, Thai Journal of Mathematics, 9 (1) (2011), 49-55.

\bibitem{Hazart} R. Hazrat, Graded rings and graded Grothendieck groups, Cambridge University press, 2016.

\bibitem{Jaber Bataineh Khashan} A. Jaber, M. Bataineh and H. Khashan, Almost graded prime ideals, Journal of Mathematics and Statistics, 4 (4) (2008), 231-235.

\bibitem{Nastasescue} C. Nastasescu and F. Oystaeyen, Methods of graded rings, Lecture Notes in Mathematics, 1836, Springer-Verlag, Berlin, 2004.

\bibitem{Refai Hailat} M. Refai, M. Hailat and S. Obiedat, Graded radicals and graded prime spectra, Far East Journal of Mathematical Sciences, (2000), 59-73.

\bibitem{Saber} H. Saber, T. Alraqad and R. Abu-Dawwas, On graded $s$-prime submodules, AIMS Mathematics, 6 (2020), 2510-2524.

\bibitem{Ugurlu} E. A. Ugurlu, Generalizations of $r$-ideals of commutative rings, arXiv:2006.12261.

\bibitem{RaTeShKo}R. N. Uregen, \"{U}. Tekir, K. P. Shum and S. Ko\c{c}, On graded $2$-absorbing quasi primary ideals, Southeast Asian Bulletin of
Mathematics, 43 (4) (2019), 601-613.




\end{thebibliography}
\end{document}